\documentclass{article}
\usepackage[utf8]{inputenc}

\usepackage{amsmath,amsthm,amssymb,amsfonts}
\usepackage{verbatim}
\usepackage{array}

\usepackage{hyperref}


\usepackage{stmaryrd} 
\usepackage{hyperref}

\usepackage[colorinlistoftodos]{todonotes}

\newtheorem{thm}{Theorem}
\newtheorem{lem}[thm]{Lemma}
\newtheorem{prop}[thm]{Proposition}
\newtheorem{exl}[thm]{Example}
\newtheorem{cor}[thm]{Corollary}

\newtheorem{defn}[thm]{Definition}
\theoremstyle{remark}
\newtheorem{remark}[thm]{Remark}
\theoremstyle{defn}

\numberwithin{thm}{section}

\newcommand{\adj}{\leftrightarrow}
\newcommand{\adjeq}{\leftrightarroweq}

\DeclareMathOperator{\id}{id}

\DeclareMathOperator{\Fix}{Fix}
\newcommand{\Z}{\mathbb{Z}}

\title{Freezing Sets for Arbitrary Digital Dimension}

\author{Laurence Boxer 
        \thanks{Department of Computer and Information Sciences,
        Niagara University, NY 14109, USA; and \newline
        Department of Computer Science and Engineering,
        State University of New York at Buffalo\newline
        email: boxer@niagara.edu}
}

\begin{document}
\date{ }
\maketitle{}

\begin{abstract}
    We show how to obtain freezing sets for digital images
    in~$\Z^n$ for arbitrary~$n$, using the $c_1$ and $c_n$
    adjacencies.
\end{abstract}

\section{Introduction}
The study of freezing sets is part of the fixed point theory
of digital topology. Freezing sets were introduced in 
in~\cite{BxFpSets2} and studied in subsequent papers
including~\cite{BxConvex,subsetsAnd,consequences}. These papers
focus mostly on digital images in $\Z^2$.

In the current paper, we obtain results for freezing sets in $\Z^n$,
for arbitrary~$n$. We show that given a finite connected digital image
$X \subset \Z^n$, if we use the $c_1$ or $c_n$ adjacency
and $X$ is decomposed into a union of cubes~$K_i$,
then we can construct a freezing set for $X$ from those of the $K_i$.

\section{Preliminaries}
\subsection{Adjacencies}
Much of this section is quoted or paraphrased from~\cite{bs20}.

A digital image is a pair $(X,\kappa)$ where
$X \subset \Z^n$ for some $n$ and $\kappa$ is
an adjacency on $X$. Thus, $(X,\kappa)$ is a graph
with $X$ for the vertex set and $\kappa$ 
determining the edge set. Usually, $X$ is finite,
although there are papers that consider infinite $X$. 
Usually, adjacency reflects some type of
``closeness" in $\Z^n$ of the adjacent points.
When these ``usual" conditions are satisfied, one
may consider the digital image as a model of a
black-and-white ``real world" digital image in which
the black points (foreground) are 
the members of $X$ and the white points 
(background) are members of $\Z^n \setminus X$.

We write $x \adj_{\kappa} y$, or $x \adj y$ when
$\kappa$ is understood or when it is unnecessary to
mention $\kappa$, to indicate that $x$ and
$y$ are $\kappa$-adjacent. Notations 
$x \adjeq_{\kappa} y$, or $x \adjeq y$ when
$\kappa$ is understood, indicate that $x$ and
$y$ are $\kappa$-adjacent or are equal.

The most commonly used adjacencies are the
$c_u$ adjacencies, defined as follows.
Let $X \subset \Z^n$ and let $u \in \Z$,
$1 \le u \le n$. Then for points
\[x=(x_1, \ldots, x_n) \neq (y_1,\ldots,y_n)=y\]
we have $x \adj_{c_u} y$ if and only if
\begin{itemize}
    \item for at most $u$ indices $i$ we have
          $|x_i - y_i| = 1$, and
    \item for all indices $j$, $|x_j - y_j| \neq 1$
          implies $x_j=y_j$.
\end{itemize}

The $c_u$-adjacencies are often denoted by the
number of adjacent points a point can have in the
adjacency. E.g.,
\begin{itemize}
\item in $\Z$, $c_1$-adjacency is 2-adjacency;
\item in $\Z^2$, $c_1$-adjacency is 4-adjacency and
      $c_2$-adjacency is 8-adjacency;
\item in $\Z^3$, $c_1$-adjacency is 8-adjacency,
      $c_2$-adjacency is 18-adjacency, and 
      $c_3$-adjacency is 26-adjacency.
\end{itemize}

In this paper, we mostly use the $c_1$ and $c_n$ adjacencies.

When $(X,\kappa)$ is understood to be a digital image under discussion,
we use the following notations. For $x \in X$,
\[ N(x) = \{y \in X \,| \, y \adj_{\kappa} x\},
\]
\[ N^*(x) = \{y \in X \,| \, y \adjeq_{\kappa} x\} = N(x) \cup \{x\}.
\]

\begin{defn}
{\rm \cite{Ros79}}
Let $X \subset \Z^n$. The {\em boundary of } $X$ is
\[ Bd(X) = \{ \, x \in X \mid \mbox{there exists } y \in \Z^n \setminus X
              \mbox{ such that } x \adj_{c_1} y \, \}.
\]
\end{defn}

\subsection{Digitally continuous functions}
Much of this section is quoted or paraphrased from~\cite{bs20}.

We denote by $\id$ or $\id_X$ the
identity map $\id(x)=x$ for all $x \in X$.

\begin{defn}
{\rm \cite{Rosenfeld, Bx99}}
Let $(X,\kappa)$ and $(Y,\lambda)$ be digital
images. A function $f: X \to Y$ is 
{\em $(\kappa,\lambda)$-continuous}, or
{\em digitally continuous} or just {\em continuous} when $\kappa$ and
$\lambda$ are understood, if for every
$\kappa$-connected subset $X'$ of $X$,
$f(X')$ is a $\lambda$-connected subset of $Y$.
If $(X,\kappa)=(Y,\lambda)$, we say a function
is {\em $\kappa$-continuous} to abbreviate
``$(\kappa,\kappa)$-continuous."
\end{defn}

\begin{thm}
{\rm \cite{Bx99}}
A function $f: X \to Y$ between digital images
$(X,\kappa)$ and $(Y,\lambda)$ is
$(\kappa,\lambda)$-continuous if and only if for
every $x,y \in X$, if $x \adj_{\kappa} y$ then
$f(x) \adjeq_{\lambda} f(y)$.
\end{thm}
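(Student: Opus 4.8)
The plan is to prove the two implications separately, working throughout from the definition of $\kappa$-connectedness as path-connectedness in the adjacency graph determined by $\kappa$ (and similarly for $\lambda$).

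For the forward implication, suppose $f$ is $(\kappa,\lambda)$-continuous and that $x \adj_{\kappa} y$ in $X$. Then $X' = \{x,y\}$ is a $\kappa$-connected subset of $X$, so by continuity $f(X') = \{f(x),f(y)\}$ is a $\lambda$-connected subset of $Y$. A subset of $Y$ with at most two elements is $\lambda$-connected precisely when those elements are equal or $\lambda$-adjacent; hence $f(x) \adjeq_{\lambda} f(y)$, which is the asserted edge condition.

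For the converse, assume that for all $x,y \in X$, $x \adj_{\kappa} y$ implies $f(x) \adjeq_{\lambda} f(y)$, and let $X'$ be a $\kappa$-connected subset of $X$. To show $f(X')$ is $\lambda$-connected, take arbitrary $u,v \in f(X')$ and pick $a,b \in X'$ with $f(a)=u$ and $f(b)=v$. Since $X'$ is $\kappa$-connected there is a sequence $a = x_0, x_1, \ldots, x_m = b$ of points of $X'$ with $x_i \adj_{\kappa} x_{i+1}$ for each $i$. Applying the hypothesis along this sequence gives $f(x_i) \adjeq_{\lambda} f(x_{i+1})$ for each $i$; deleting any index at which $f(x_i)=f(x_{i+1})$ produces a $\lambda$-path in $f(X')$ joining $u$ to $v$. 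As $u,v$ were arbitrary, $f(X')$ is $\lambda$-connected, and since $X'$ was an arbitrary $\kappa$-connected subset, $f$ is $(\kappa,\lambda)$-continuous.

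The argument is essentially routine, so I do not expect a substantial obstacle; the only points needing a little care are the degenerate ones — noting that a one-point set (and the empty set) is connected, so singleton and empty subsets cause no trouble in either direction, and observing that collapsing repeated consecutive vertices in the image of a $\kappa$-path still leaves a legitimate $\lambda$-path. Making these observations explicit is all that is required to complete the proof.
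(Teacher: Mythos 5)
Your proof is correct: the paper states this theorem without proof, citing \cite{Bx99}, and your argument (the two-point connected set $\{x,y\}$ for the forward direction, and transporting a chain of adjacent points through $f$, deleting repetitions, for the converse) is essentially the standard proof of this characterization. The degenerate cases you flag (singletons, empty set, collapsing repeated consecutive image points) are exactly the right details to make explicit, and nothing further is needed.
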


\begin{thm}
\label{composition}
{\rm \cite{Bx99}}
Let $f: (X, \kappa) \to (Y, \lambda)$ and
$g: (Y, \lambda) \to (Z, \mu)$ be continuous 
functions between digital images. Then
$g \circ f: (X, \kappa) \to (Z, \mu)$ is continuous.
\end{thm}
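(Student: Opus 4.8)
The plan is to prove the statement directly from the definition of digital continuity, which is phrased in terms of preservation of connectedness, rather than from the pointwise criterion. Let $X'$ be an arbitrary $\kappa$-connected subset of $X$; the goal is to show that $(g \circ f)(X')$ is a $\mu$-connected subset of $Z$.

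First I would invoke continuity of $f$: since $X'$ is $\kappa$-connected and $f$ is $(\kappa,\lambda)$-continuous, $f(X')$ is a $\lambda$-connected subset of $Y$. Then I would feed this set into $g$: since $f(X')$ is a $\lambda$-connected subset of $Y$ and $g$ is $(\lambda,\mu)$-continuous, $g(f(X'))$ is a $\mu$-connected subset of $Z$. Finally, observing the set identity $g(f(X')) = (g \circ f)(X')$, we conclude that $(g \circ f)(X')$ is $\mu$-connected. Since $X'$ was an arbitrary $\kappa$-connected subset of $X$, the map $g \circ f$ is $(\kappa,\mu)$-continuous.

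An alternative route uses the pointwise characterization stated just before this theorem: if $x \adj_{\kappa} y$ in $X$, then continuity of $f$ gives $f(x) \adjeq_{\lambda} f(y)$; splitting into the case $f(x) = f(y)$, where $(g \circ f)(x) = (g \circ f)(y)$ trivially, and the case $f(x) \adj_{\lambda} f(y)$, where continuity of $g$ gives $(g \circ f)(x) \adjeq_{\mu} (g \circ f)(y)$, yields $(g \circ f)(x) \adjeq_{\mu} (g \circ f)(y)$ in every case, so $g \circ f$ is continuous. Either way there is no real obstacle; the only points demanding a moment's care are the bookkeeping identity $g(f(X')) = (g \circ f)(X')$ in the first approach and, in the second, remembering to handle the case in which $f$ collapses the adjacent pair $x,y$ to a single point.
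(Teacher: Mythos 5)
Your proof is correct; both the connectedness-preservation argument and the pointwise alternative (including the care taken with the case $f(x)=f(y)$) are valid and are the standard arguments for this fact. The paper itself gives no proof, citing the result from the reference, and your reasoning matches the usual proof found there, so there is nothing to add.
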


A {\em $\kappa$-path} is a continuous
function $r: ([0,m]_{\Z},c_1) \to (X,\kappa)$. 

For a digital image $(X,\kappa)$, we use the notation
\[ C(X,\kappa) = \{f: X \to X \, | \,
   f \mbox{ is continuous}\}.
\]

A function $f: (X,\kappa) \to (Y,\lambda)$ is
an {\em isomorphism} (called a {\em homeomorphism}
in~\cite{Bx94}) if $f$ is a continuous bijection
such that $f^{-1}$ is continuous.

For $X \in \Z^n$, the {\em projection to the $i^{th}$ coordinate}
is the function $p_i: X \to \Z$ defined by
\[ p_i(x_1, \ldots, x_n) = x_i.
\]

A {\em (digital) line segment} in $(X,\kappa)$ is a set 
$S = f([0,m]_{\Z})$, where $f$ is a digital path,
such that the points of $S$ are collinear;
$S$ is {\em axis parallel}
if for all but one of the indices~$i$,
$p_i \circ f$ is a constant function.

\subsection{Cube terminology}
\label{cubeSec}
Let $Y = \Pi_{i=1}^n [a_i,b_i]_{\Z}$, where $b_i > a_i$.

If for $1 \le j \le n$ there are exactly $j$ indices~$i$ such that
$b_i > a_i$ (equivalently, exactly $n-j$ indices~$i$ such that
$b_i = a_i$), we call $Y$ a {\em $j$-dimensional cube} or a
{\em $j$-cube}.

A $j$-cube $K$ in $Y$ such that 
\begin{itemize}
    \item for $j$ indices~$i$, $p_i(K) = [a_i,b_i]_{\Z}$ and
    \item for all other indices~$i$, $p_i(K) = \{a_i\}$ or 
          $p_i(K) = \{b_i\}$,
\end{itemize}
is a {\em face} or a {\em $j$-face} of $Y$.

A {\em corner} of $Y$ is any of the points of $\Pi_{i=1}^n \{a_i,b_i\}$.
An {\em edge} of $Y$ is an axis-parallel digital line segment
joining two corners of $Y$.

\section{Tools for determining fixed point sets}
\begin{defn}
{\rm \cite{BxFpSets2}}
\label{freezeDef}
Let $(X,\kappa)$ be a
digital image. We say
$A \subset X$ is a 
{\em freezing set for $X$}
if given $g \in C(X,\kappa)$,
$A \subset \Fix(g)$ implies
$g=\id_X$.
\end{defn}

\begin{thm}
{\rm \cite{BxFpSets2}}
\label{freezeInvariant}
Let $A$ be a freezing set for the digital image $(X,\kappa)$ and let
$F: (X,\kappa) \to (Y,\lambda)$ be an isomorphism. Then $F(A)$ is
a freezing set for $(Y,\lambda)$.
\end{thm}

The following are useful for determining fixed 
point and freezing sets.

\begin{prop}
{\rm (Corollary 8.4 of~\cite{bs20})}
\label{uniqueShortestProp}
Let $(X,\kappa)$ be a digital image and
$f \in C(X,\kappa)$. Suppose
$x,x' \in \Fix(f)$ are such that
there is a unique shortest
$\kappa$-path $P$ in~$X$ from $x$ 
to $x'$. Then $P \subseteq \Fix(f)$.
\end{prop}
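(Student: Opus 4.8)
The plan is to push the given shortest path through $f$, observe that the image is again a shortest $\kappa$-path from $x$ to $x'$, and then use uniqueness to conclude it coincides with $P$ pointwise. Concretely, represent $P$ as the image of a $\kappa$-path $r\colon ([0,m]_{\Z},c_1) \to (X,\kappa)$ with $r(0)=x$, $r(m)=x'$, where $m$ is the length of the shortest $\kappa$-path in $X$ from $x$ to $x'$. Note that $r$ must be injective: a repeated value would let us excise a subpath and obtain a strictly shorter path, contradicting minimality of $m$. By Theorem~\ref{composition}, $\rho := f \circ r \colon ([0,m]_{\Z},c_1) \to (X,\kappa)$ is continuous, and since $x,x' \in \Fix(f)$ we have $\rho(0)=x$ and $\rho(m)=x'$.

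Next I would rule out ``stutter steps'' in $\rho$. A priori, continuity of $\rho$ only gives $\rho(t) \adjeq_{\kappa} \rho(t+1)$ for $t \in [0,m-1]_{\Z}$. But if $\rho(t)=\rho(t+1)$ for some $t$, then deleting that repetition from the sequence $\rho(0),\rho(1),\dots,\rho(m)$ produces a $\kappa$-path in $X$ from $x$ to $x'$ of length $m-1 < m$, contradicting the minimality of $m$. Hence $\rho(t) \adj_{\kappa} \rho(t+1)$ for every $t$, so $\rho$ is a genuine $\kappa$-path of length exactly $m$ from $x$ to $x'$; it is therefore a shortest such path, and, as with $r$, it is injective.

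Finally I would invoke uniqueness. Since $\rho$ realizes a shortest $\kappa$-path from $x$ to $x'$ and the shortest such path $P$ is unique, the image of $\rho$ equals $P$. Moreover the subgraph of $(X,\kappa)$ induced on $P$ has no ``shortcut'' edge $r(i)\adj_{\kappa} r(j)$ with $|i-j|\ge 2$ (such an edge would again shorten the path), so $P$ with its induced adjacency is a path graph whose two endpoints are $x$ and $x'$; an injective, continuous, length-$m$ traversal of such a graph from $x$ to $x'$ is forced to be $r$ itself. Thus $\rho=r$, i.e.\ $f(r(t))=r(t)$ for all $t$, which is exactly $P \subseteq \Fix(f)$.

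The hard part will be the last step: converting ``uniqueness of the shortest path as a point set'' into the pointwise identity $f\circ r = r$. This relies on the shortcut-free observation, which forces $P$ to behave like an interval under $\kappa$ so that only one full-length traversal from $x$ to $x'$ exists. If one instead reads ``unique shortest $\kappa$-path'' as uniqueness of the parametrized path, this step is immediate, and the proof reduces to the first two paragraphs.
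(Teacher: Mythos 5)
Your proof is correct, and there is nothing in the paper to compare it against directly: Proposition~\ref{uniqueShortestProp} is stated here only as a quotation of Corollary 8.4 of~\cite{bs20}, with no in-paper proof. Your argument is the natural self-contained one and it is complete. Composing a minimal-length parametrization $r$ of $P$ with $f$ gives, by Theorem~\ref{composition}, a continuous $\rho=f\circ r$ with $\rho(0)=x$, $\rho(m)=x'$; your excision arguments correctly show both that $r$ is injective and that $\rho$ can have neither stutters nor repetitions, so $\rho$ is an injective $\kappa$-path of length exactly $m$ and hence a shortest one, whose image must be $P$ by uniqueness. The step you flag as delicate --- upgrading set-level uniqueness to the pointwise identity $\rho=r$ --- is handled properly: the no-shortcut observation makes the subgraph of $(X,\kappa)$ induced on $P$ a path graph with endpoints $x$ and $x'$, and then the forced-traversal induction (starting from $\rho(0)=x$, each $\rho(t+1)$ must be the unique neighbor of $\rho(t)$ in $P$ not already visited, injectivity ruling out backtracking) yields $\rho=r$, i.e.\ $f(r(t))=r(t)$ for all $t$, so $P\subseteq\Fix(f)$. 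If instead ``unique shortest path'' is read as uniqueness of the parametrized path, your first two paragraphs already finish the proof, as you note. The only points left implicit are the trivial cases $x=x'$ (where $P=\{x\}$) and $m=1$, which are immediate. In short: a correct proof of the cited result, obtained by the standard route (continuity cannot lengthen a path between fixed endpoints, uniqueness pins down the image, and the rigidity of a path graph traversal fixing an endpoint pins down the parametrization).
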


Lemma~\ref{c1pulling}, below,
\begin{quote}
$\ldots$ can
be interpreted to say that
in a $c_u$-adjacency,
a continuous function that
moves a point~$p$ also moves
a point that is ``behind"
$p$. E.g., in $\Z^2$, if $q$ and $q'$ are
$c_1$- or $c_2$-adjacent with $q$
left, right, above, or below $q'$, and a
continuous function $f$ moves $q$ to the left,
right, higher, or lower, respectively, then
$f$ also moves $q'$ to the left,
right, higher, or lower, respectively~\cite{BxFpSets2}.
\end{quote}

\begin{lem}
\label{c1pulling}
{\rm ~\cite{BxFpSets2}}
Let $(X,c_u)\subset \Z^n$ be a digital image, 
$1 \le u \le n$. Let $q, q' \in X$ be such that
$q \adj_{c_u} q'$.
Let $f \in C(X,c_u)$.
\begin{enumerate}
    \item If $p_i(f(q)) > p_i(q) > p_i(q')$
          then $p_i(f(q')) > p_i(q')$.
    \item If $p_i(f(q)) < p_i(q) < p_i(q')$
          then $p_i(f(q')) < p_i(q')$.
\end{enumerate}
\end{lem}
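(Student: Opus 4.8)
The plan is to reduce the whole statement to a one–coordinate chain of integer inequalities. First I would invoke the characterization of digital continuity quoted above (the theorem attributed to~\cite{Bx99}): since $q \adj_{c_u} q'$ and $f \in C(X,c_u)$, we obtain $f(q) \adjeq_{c_u} f(q')$. Next I would unpack what $\adjeq_{c_u}$ says coordinatewise. By the definition of the $c_u$-adjacency, any two points that are $c_u$-adjacent differ by at most $1$ in every coordinate (and by exactly $0$ in all but at most $u$ of them); the same bound holds trivially if the two points are equal. Hence for the index $i$ in question,
\[
|p_i(f(q)) - p_i(f(q'))| \le 1, \qquad |p_i(q) - p_i(q')| \le 1,
\]
the second inequality coming from $q \adj_{c_u} q'$ itself.

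For part~(1): the hypothesis $p_i(q) > p_i(q')$ together with $|p_i(q) - p_i(q')| \le 1$ forces $p_i(q') = p_i(q) - 1$, since the coordinates are integers. Likewise $p_i(f(q)) > p_i(q)$ gives $p_i(f(q)) \ge p_i(q) + 1$. Combining these with the bound $p_i(f(q')) \ge p_i(f(q)) - 1$ yields
\[
p_i(f(q')) \ \ge\ p_i(f(q)) - 1 \ \ge\ p_i(q) \ =\ p_i(q') + 1 \ >\ p_i(q'),
\]
which is the desired conclusion. Part~(2) is the mirror image: the hypotheses are obtained from those of part~(1) by reversing every inequality, and the same chain run with $\le$ and $\ge$ interchanged gives $p_i(f(q')) < p_i(q')$. (If one prefers, one can note that for the purposes of this single coordinate the roles of ``up'' and ``down'' are symmetric and cite part~(1), but since $X$ need not be closed under coordinate negation it is cleanest simply to rerun the three–line argument.)

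I do not expect a genuine obstacle here; the lemma is really a bookkeeping fact about integers. The two points to be careful about are (a) that it is the \emph{integrality} of the coordinates that upgrades a strict inequality such as $p_i(f(q)) > p_i(q)$ into a gap of at least $1$, and (b) that one must use $\adjeq_{c_u}$ rather than $\adj_{c_u}$ for the images $f(q), f(q')$ — continuity only guarantees adjacency-or-equality — but the coordinate bound $|p_i(f(q)) - p_i(f(q'))| \le 1$ survives the equality case, so nothing is lost.
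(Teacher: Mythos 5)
Your argument is correct: continuity gives $f(q)\adjeq_{c_u}f(q')$, so both pairs differ by at most $1$ in the $i$th coordinate, and the integer chain $p_i(f(q'))\ge p_i(f(q))-1\ge p_i(q)=p_i(q')+1$ is exactly the standard proof of this lemma (the paper itself states it without proof, citing~\cite{BxFpSets2}, where the same coordinatewise bookkeeping is used). Nothing is missing; your care about the equality case of $\adjeq_{c_u}$ and about integrality is precisely what the argument needs.
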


\begin{defn}
{\rm \cite{BxConvex}}
\label{closeNbrDef}
{\rm Let $(X,\kappa)$ be a digital image. Let
$p,q \in X$ such that 
\[ N(X,p,\kappa) \subset N^*(X,q,\kappa).
\]
Then $q$ is a {\em close $\kappa$-neighbor}
of $p$.
}
\end{defn}

\begin{lem}
{\rm \cite{bs20,BxConvex}}
\label{closeNbr}
Let $(X,\kappa)$ be a digital image. Let
$p,q \in X$ such that $q$ is a close 
$\kappa$-neighbor of $p$. Then $p$ belongs to every
freezing set of $(X,\kappa)$.
\end{lem}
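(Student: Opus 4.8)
The plan is to exhibit a continuous self-map of $X$ that is not the identity yet fixes every point of $X$ except $p$; the existence of such a map forces every freezing set to contain $p$. Concretely, I would define $g : X \to X$ by $g(p) = q$ and $g(x) = x$ for all $x \in X \setminus \{p\}$. Since $p \neq q$ (they are distinct points, as $p \adj_\kappa p$ is impossible while the close-neighbor hypothesis relates $p$ and $q$), we have $g \neq \id_X$, and $\Fix(g) = X \setminus \{p\}$.

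The heart of the argument is to verify that $g \in C(X,\kappa)$, which by the characterization of continuity (the theorem of~\cite{Bx99} quoted above) amounts to checking that $x \adj_\kappa y$ implies $g(x) \adjeq_\kappa g(y)$ for all $x, y \in X$. If neither $x$ nor $y$ equals $p$, then $g(x) = x \adj_\kappa y = g(y)$, so there is nothing to prove. By the symmetry of $\adj_\kappa$, the only remaining case is $x = p$ and $y \neq p$: here $y \adj_\kappa p$ gives $y \in N(X,p,\kappa) \subseteq N^*(X,q,\kappa)$ by the close-neighbor hypothesis, so $y \adjeq_\kappa q$; since $g(p) = q$ and $g(y) = y$, this says exactly $g(x) \adjeq_\kappa g(y)$. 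Hence $g$ is continuous.

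Having established that $g$ is continuous, I finish as follows. Let $A$ be any freezing set for $(X,\kappa)$. If $p \notin A$, then $A \subseteq X \setminus \{p\} = \Fix(g)$, so by Definition~\ref{freezeDef} we would have $g = \id_X$, contradicting $g(p) = q \neq p$. Therefore $p \in A$, and since $A$ was an arbitrary freezing set, $p$ belongs to every freezing set of $(X,\kappa)$.

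The only step that requires real care is the continuity check for edges incident to $p$, and that is precisely where the close-neighbor condition $N(X,p,\kappa) \subseteq N^*(X,q,\kappa)$ is used; the rest is bookkeeping. One could alternatively phrase the whole argument as an application of Lemma~\ref{closeNbr}'s companion facts, but the direct construction above is the cleanest route.
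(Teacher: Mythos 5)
Your proof is correct and is essentially the standard argument for this lemma: the paper itself quotes it from~\cite{bs20,BxConvex} without proof, and the proof given there is exactly your construction of the map sending $p$ to $q$ and fixing all other points, with continuity verified via $N(X,p,\kappa) \subseteq N^*(X,q,\kappa)$. No gaps; nothing further is needed.
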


\begin{thm}
\label{closeCorners}
Let $Y = \Pi_{i=1}^3 [a_i,b_i]_{\Z}$ be
      such that $b_i > a_i +1$ for all $i$.
Let $A = \Pi_{i=1}^3 \{a_i,b_i\}$.
Then $A$ is a subset of every freezing set for $(X,c_3)$.
\end{thm}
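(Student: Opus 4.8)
The plan is to derive this directly from Lemma~\ref{closeNbr}: it is enough to exhibit, for each of the eight corners $p$ of $Y$, a point $q \in Y$ that is a close $c_3$-neighbor of $p$, since then Lemma~\ref{closeNbr} forces each such $p$ into every freezing set of $(Y,c_3)$, and the eight corners are exactly the members of $A$. (Here I read the ``$X$'' of the statement as $Y$.) All eight corners behave symmetrically, so it suffices to describe the construction for a single one.

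Fix a corner $p=(p_1,p_2,p_3)$, and let $q=(q_1,q_2,q_3)$ be its ``inward diagonal neighbor'': $q_i=a_i+1$ if $p_i=a_i$, and $q_i=b_i-1$ if $p_i=b_i$. The hypothesis $b_i>a_i+1$ is used precisely here, to guarantee $a_i\le q_i\le b_i$ for each $i$, so that $q\in Y$; note also that $|q_i-p_i|=1$ for all $i$.

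The core step is to check that $N(Y,p,c_3)\subseteq N^*(Y,q,c_3)$. I would first record that for $z,w\in\Z^3$ the defining conditions of $c_3$-adjacency reduce to: $z\adjeq_{c_3}w$ iff $|z_i-w_i|\le 1$ for every $i$. Now take any $z\in N(Y,p,c_3)$, so $z\in Y$ and $|z_i-p_i|\le 1$ for all $i$. For an index $i$ with $p_i=a_i$, the inequality $z_i\ge a_i$ (from $z\in Y$) together with $z_i\le a_i+1$ forces $z_i\in\{a_i,a_i+1\}$, whence $|z_i-q_i|\le 1$; the case $p_i=b_i$ is symmetric, giving $z_i\in\{b_i-1,b_i\}$ and again $|z_i-q_i|\le 1$. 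Hence $z\adjeq_{c_3}q$, i.e.\ $z\in N^*(Y,q,c_3)$. Thus $q$ is a close $c_3$-neighbor of $p$, and Lemma~\ref{closeNbr} gives $p\in F$ for every freezing set $F$ of $(Y,c_3)$. Applying this to all eight corners yields $A\subseteq F$.

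I do not anticipate any genuine obstacle: the argument is entirely a matter of choosing the right witness $q$ and then unwinding the definitions of $c_3$-adjacency, the neighborhoods $N$ and $N^*$, and the ``close neighbor'' notion. If there is a subtle point, it is merely making sure $q$ actually lies in $Y$, which is exactly the role of the hypothesis $b_i>a_i+1$; without it the inward diagonal neighbor could fall outside the box (or collapse onto another corner), so the construction would have to be reconsidered.
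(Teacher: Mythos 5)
Your proposal is correct and follows essentially the same route as the paper: the paper likewise takes each corner's inward diagonal neighbor (the unique point of the cube differing from the corner by 1 in every coordinate) as a close $c_3$-neighbor and invokes Lemma~\ref{closeNbr}, merely normalizing first to $a_i=0$ via Theorem~\ref{freezeInvariant}. Your write-up in fact spells out the containment $N(Y,p,c_3)\subseteq N^*(Y,q,c_3)$ more explicitly than the paper does.
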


\begin{proof}
By Theorem~\ref{freezeInvariant}, we may assume 
$a_i=0$ for all $i$, so
$A = \Pi_{i=1}^3 \{0,b_i\}$. It is easily seen that
every $a \in A$ has a close neighbor in $X$, namely the
unique member of $X$ that differs from $a$ by 1 in every
coordinate. Therefore, by Lemma~\ref{closeNbr},
$A$ is a subset of every freezing set for $(X,c_u)$. 
\end{proof}

\section{$c_1$-Freezing sets for cubes}
The following is presented as 
Theorem~5.11 of~\cite{BxFpSets2}.
However, there is an error in the argument given
in \cite{BxFpSets2} for the proof of the first
assertion. We give a correct proof below.

\begin{thm}
\label{cornersFreeze}
Let $X = \Pi_{i=1}^n [r_i,s_i]_{\Z}$.
Let $A = \Pi_{i=1}^n \{r_i,s_i\}$.
\begin{itemize}
\item Let $Y = \Pi_{i=1}^n [a_i,b_i]_{\Z}$ be
      such that $[r_i,s_i] \subset [a_i,b_i]_{\Z}$ for all $i$. 
      Let $f: X \to Y$ be $c_1$-continuous. If 
      $A \subset \Fix(f)$, then $X \subset \Fix(f)$.
\item $A$ is a freezing set for $(X,c_1)$ that is minimal for 
      $n \in \{1,2\}$.
\end{itemize}
\end{thm}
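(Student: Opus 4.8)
\emph{Proof sketch.}
The plan is to prove the first bullet by induction on the dimension of faces of $X$ and then to deduce the second bullet. The engine of the induction is the following refinement of Proposition~\ref{uniqueShortestProp} to an enlarged codomain: \emph{if $P,Q\in\Fix(f)$ differ in exactly one coordinate, say the $k$th, then the axis-parallel segment $E\subseteq X$ from $P$ to $Q$ lies in $\Fix(f)$.} To prove this, list the points of $E$ in order from $P$ to $Q$ as $e^{(0)}=P,\dots,e^{(m)}=Q$ with $m=|p_k(P)-p_k(Q)|$; consecutive $e^{(t)}$ are $c_1$-adjacent, so by continuity $P=f(e^{(0)}),\dots,f(e^{(m)})=Q$ is a walk in $Y$ whose $m$ consecutive transitions are each trivial or a single $c_1$-edge. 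Since every walk from $P$ to $Q$ in $\Z^n$ uses at least $\|P-Q\|_1=m$ edges, none of the transitions is trivial, and the $m$ resulting unit displacements sum to $Q-P$; as $Q-P$ is concentrated in the $k$th coordinate with magnitude $m$, each displacement must move only the $k$th coordinate, all in the same direction. Hence $f(e^{(t)})=e^{(t)}$ for every $t$.

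Granting this, I would prove the first bullet as follows. For $0\le j\le n$ let $S_j$ be the union of all $j$-dimensional faces of $X$ in the sense of Section~\ref{cubeSec}, so that $S_0=A$ and $S_n=X$; I claim $S_j\subseteq\Fix(f)$ for all $j$, by induction on $j$. The base case $j=0$ is the hypothesis $A\subseteq\Fix(f)$. Assume $S_j\subseteq\Fix(f)$, let $F$ be a $(j{+}1)$-face of $X$, and let $x\in F$. Pick a coordinate $k$ that varies on $F$, so that $p_k(F)=[r_k,s_k]_{\Z}$ with $s_k>r_k$, and let $E\subseteq F$ be the axis-parallel segment through $x$ obtained by letting the $k$th coordinate run over $[r_k,s_k]_{\Z}$. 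Its two endpoints agree with $x$ in every coordinate but the $k$th, where they equal $r_k$ and $s_k$ respectively; thus they differ in exactly the $k$th coordinate, and each lies on the $j$-face of $F$ — hence of $X$ — on which the $k$th coordinate is held at its endpoint value, which lies in $\Fix(f)$ by the inductive hypothesis. The observation above then gives $E\subseteq\Fix(f)$, so $x\in\Fix(f)$; as $x$ and $F$ were arbitrary, $S_{j+1}\subseteq\Fix(f)$, and taking $j=n$ yields $X\subseteq\Fix(f)$.

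For the second bullet, applying the first bullet with $Y=X$ shows that every $g\in C(X,c_1)$ fixing $A$ equals $\id_X$, so $A$ is a freezing set for $(X,c_1)$. For minimality when $n\in\{1,2\}$, I would exhibit for each corner $a\in A$ a close $c_1$-neighbor, namely the point $q_a\in X$ obtained from $a$ by moving one step toward the interior in every coordinate. When $n\le 2$, each point of $N(X,a,c_1)$ arises from $a$ by a single inward unit step in one coordinate and hence lies in $N^*(X,q_a,c_1)$, so $N(X,a,c_1)\subseteq N^*(X,q_a,c_1)$ and Lemma~\ref{closeNbr} places $a$ in every freezing set of $(X,c_1)$. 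Therefore $A$ is contained in every freezing set, so the freezing set $A$ has no proper freezing subset and is minimal.

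The crux, I expect, is the codomain issue: Proposition~\ref{uniqueShortestProp} is stated for self-maps, so one must ensure that enlarging the codomain to $Y\supseteq X$ cannot create a shorter connection between $P$ and $Q$; this is exactly why the above observation is confined to pairs differing in a single coordinate, whose connecting segment realizes the $\ell_1$-distance, a universal lower bound for walk length in any $\Z^n$. Two smaller points need care: in the inductive step one must always pick a coordinate that genuinely varies on the face (otherwise the segment degenerates and its endpoints need not differ in exactly one coordinate), and for minimality one should note that once $n\ge 3$ the point $q_a$ is no longer $c_1$-adjacent to the neighbors of $a$, so the close-neighbor argument — and the minimality conclusion — genuinely fail, consistent with the restriction to $n\in\{1,2\}$.
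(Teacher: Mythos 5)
Your proposal is correct, and its skeleton --- induction over the $j$-faces of $X$, driven by the fact that an axis-parallel segment between two points of $\Fix(f)$ differing in one coordinate lies in $\Fix(f)$ --- is the same as the paper's. The genuine difference is how that segment step is justified. The paper invokes Proposition~\ref{uniqueShortestProp}, which is stated for self-maps $f \in C(X,\kappa)$, even though the first bullet concerns $f: X \to Y$ with possibly $X \subsetneq Y$ (indeed the paper's proof begins ``Let $f \in C(X,c_1)$,'' which does not literally match the hypothesis); you instead prove the needed statement directly for the enlarged codomain by a length count: the image of the segment is a walk of $m=\|P-Q\|_1$ transitions from $P$ to $Q$, each trivial or a unit $c_1$-step, so all $m$ must be unit steps in the $k$th coordinate toward $Q$, forcing $f(e^{(t)})=e^{(t)}$. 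This makes the argument self-contained precisely at the point the theorem's generality requires, and it is the right fix for the codomain issue you flag. You also make the minimality claim for $n\in\{1,2\}$ explicit via close neighbors and Lemma~\ref{closeNbr}, where the paper defers to~\cite{BxFpSets2}; your argument is the standard one and is correct under the implicit nondegeneracy assumption $s_i>r_i$ that the paper also makes. Net effect: same strategy, with a more careful and more general treatment of the key segment lemma, plus an explicit minimality argument.
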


\begin{figure}
    \centering
    \includegraphics{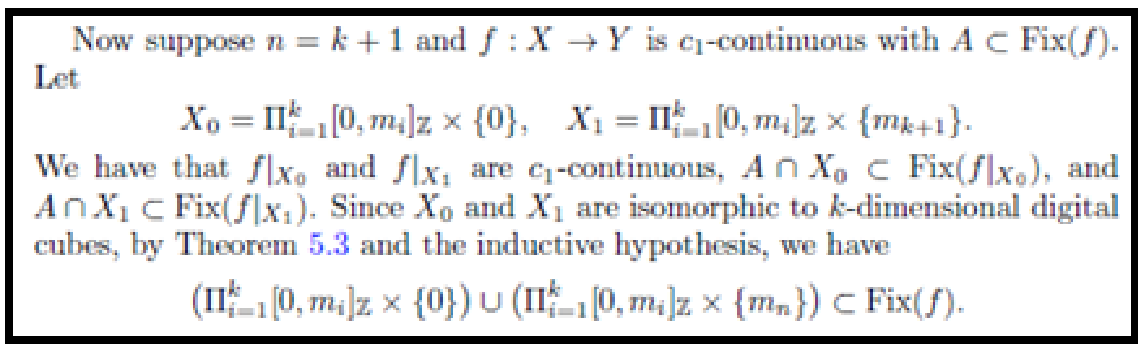}
    \caption{A part of the inductive step of an induction
    argument for Theorem~\ref{cornersFreeze} 
    in~\cite{BxFpSets2}, containing an error. Since
    it has not been shown that $f(X_0) \subset X_0$ and
    $f(X_1) \subset X_1$, it does not follow that
    $A \cap X_0 \subset \Fix(f|_{X_0})$ and
    $A \cap X_1 \subset \Fix(f|_{X_1})$.
    }
    \label{fig:errorInInductiveStep}
\end{figure}

As shown in Figure~\ref{fig:errorInInductiveStep},
it was not shown in~\cite{BxFpSets2} for the inductive
step of the argument offered as proof
that $f(X_0) \subset X_0$ and $f(X_1) \subset X_1$; hence if
$X$ is a proper subset of $Y$ then it does not follow that
    $A \cap X_0 \subset \Fix(f|_{X_0})$ and
    $A \cap X_1 \subset \Fix(f|_{X_1})$.
In the following, we give a correct proof of
the first assertion of Theorem~\ref{cornersFreeze}, using a rather
different approach than was employed in~\cite{BxFpSets2}.

\begin{proof}
By Theorem~\ref{freezeInvariant}, we may assume
\[ X = \Pi_{i=1}^n [0,m_i]_{\Z},~~~~ A = \{ \,0,m_i \, \}.
\]
Let $f \in C(X,c_1)$ such that $f|_A = \id_A$. 
Observe that by Proposition~\ref{uniqueShortestProp}, 
\begin{equation}
\label{edgeInFix(f)}
\begin{array}{l}
\mbox{if $b_1$ and $b_2$ are members of $\Fix(f)$ that differ in 
exactly one coordinate,} \\
\mbox{then the digital segment from $b_1$ 
to $b_2$ is a subset of $\Fix(f)$.}
\end{array}
\end{equation}
In particular, let
\[ X_1 = \left \{ \begin{array}{ll} x \in X \mid & x \mbox{ belongs to a 1-cube (an axis-parallel segment) } \\
  & \mbox{with endpoints in } A \end{array} \right \}.
\]
By~(\ref{edgeInFix(f)}), $X_1 \subset \Fix(f)$.

We proceed inductively. For $j \in \{\, 1, \ldots, n \, \}$, let
\[ X_j = \{ \, x \in X \mid x \mbox{ belongs to a $j$-face of } X \, \}.
\]
Note a $j$-face of $X$ is a $j$-cube with corners in $A$.
Suppose $X_{\ell} \subset \Fix(f)$ for some $\ell \ge 1$.
Given $x \in X_{\ell +1}$, let $K$ be an $(\ell + 1)$-face 
of $Y$ such that $x \in K$. 
Let $F_1$ and $F_2$ be opposite faces of $K$, i.e., for some index~$d$,
$x_i \in F_i$ implies $p_d(x_1)=0$ and $p_d(x_2)=m_d$.

Let $x \in K$. Then $x$ is a point of an axis-parallel segment
from a point of $F_1$ to a point of $F_2$.
By~(\ref{edgeInFix(f)}), $x \in \Fix(f)$.
Thus, $X_{\ell+1} \subset \Fix(f)$. 
This completes our induction. In particular, 
$X = X_n \subset \Fix(f)$.

Thus for $Y=X$, it follows that $A$ is a freezing set for
$(X,c_1)$. That $A$ is minimal for $n \in \{\, 1,2 \, \}$
follows as in~\cite{BxFpSets2}.
\end{proof}

The set of corners of a cube is not always a minimal $c_1$-freezing set,
as shown by the following example in which the set $A$ is a proper
subset of the set of corners.

\begin{exl}
{\rm \cite{BxFpSets2}}
\label{hypercubeEx}
Let $X = [0,1]_{\Z}^3$. Let
\[ A = \{ \, (0, 0, 0), (0, 1, 1), (1, 0, 1), (1, 1, 0) \, \}.
\]
Then $A$  is a  freezing set for $(X, c_1)$.
\end{exl}

\section{$c_1$-Freezing sets for unions of cubes}

In this section, we show how to obtain $c_1$-freezing sets for 
finite subsets of $\Z^n$.

\begin{thm}
\label{unionCubes}
Let $X = \bigcup_{i=1}^m K_i$ where $m \ge 1$,
\[ K_i = \prod_{j=1}^n [a_{ij}, b_{ij}]_{\Z} \subset \Z^n,
\]
and $X$ is $c_1$-connected. Let 
\[ A_i = \prod_{j=1}^n \{ \, a_{ij},b_{ij} \, \}.
\]
Let $A = \bigcup_{i=1}^n A_i$.
Then $A$ is a freezing set for $(X,c_1)$.
\end{thm}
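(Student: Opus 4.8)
The plan is to reduce the theorem to the first assertion of Theorem~\ref{cornersFreeze}, applied one cube at a time. Suppose $f \in C(X,c_1)$ with $A \subset \Fix(f)$; I want to deduce $f = \id_X$. The key idea is that, although $f$ need not map a cube $K_i$ into itself, it does map $K_i$ into $X$, and $X$ sits inside a single box $Y = \prod_{j=1}^n [\alpha_j,\beta_j]_{\Z}$, where $\alpha_j = \min_{1 \le k \le m} a_{kj}$ and $\beta_j = \max_{1 \le k \le m} b_{kj}$ --- and the first bullet of Theorem~\ref{cornersFreeze} is designed exactly to handle a $c_1$-continuous map of a cube into a larger box.

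So, for each $i$, I would argue as follows. The restriction $f|_{K_i}$ is $c_1$-continuous as a map $K_i \to X$ because $K_i \subset X$; composing with the (trivially $c_1$-continuous) inclusion $X \hookrightarrow Y$ and using Theorem~\ref{composition}, $f|_{K_i}$ is $c_1$-continuous as a map $K_i \to Y$. Now $K_i = \prod_{j=1}^n [a_{ij},b_{ij}]_{\Z}$ has corner set $A_i$, and $[a_{ij},b_{ij}]_{\Z} \subset [\alpha_j,\beta_j]_{\Z}$ for every $j$, so the hypotheses of the first assertion of Theorem~\ref{cornersFreeze} are met with $K_i$ playing the role of $X$ and $Y$ the role of $Y$. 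Since $A_i \subset A \subset \Fix(f)$, we have $A_i \subset \Fix(f|_{K_i})$, and Theorem~\ref{cornersFreeze} then yields $K_i \subset \Fix(f|_{K_i})$, i.e.\ $f(x)=x$ for every $x \in K_i$. As this holds for all $i$ and $X = \bigcup_{i=1}^m K_i$, it follows that $f = \id_X$; and since $A = \bigcup_{i=1}^m A_i \subset X$, this is exactly the statement that $A$ is a freezing set for $(X,c_1)$.

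I expect no serious obstacle, precisely because the difficulty has been front-loaded into Theorem~\ref{cornersFreeze}: the one point that genuinely needs care is that we may \emph{not} assume $f(K_i) \subset K_i$ --- this is the very gap flagged in Figure~\ref{fig:errorInInductiveStep} --- so we must route through the version of the cube result that permits a larger codomain rather than through the plain freezing-set property of $A_i$ for $(K_i,c_1)$. Everything else is bookkeeping; in particular, $c_1$-connectedness of $X$ does not appear to enter the argument beyond pinning down the standing setting.
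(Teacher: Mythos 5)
Your proposal is correct and follows essentially the same route as the paper: restrict $f$ to each cube $K_i$ and invoke the first assertion of Theorem~\ref{cornersFreeze} (the version allowing a larger codomain, since $f(K_i)\subset K_i$ is not guaranteed) to get $K_i\subset\Fix(f)$ for every $i$. Your explicit construction of the bounding box $Y$ and the remark that $c_1$-connectedness is not actually used simply make explicit details the paper's terse proof leaves implicit.
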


\begin{proof}
Let $f \in C(X,c_1)$ be such that $A \subset \Fix(f)$.

Given $x \in X$, we have $x \in K_i$ for some $i$. Since 
$A_i \subset \Fix(f)$, it follows from Theorem~\ref{cornersFreeze}
that $x \in \Fix(f)$. Thus $X = \Fix(f)$, so $A$ is a freezing set.
\end{proof}

\begin{cor}
The wedge $K_1 \vee K_2$ of two digital cubes in $\Z^n$ with axis-parallel
edges has for a $c_1$-freezing set $K_1' \cup K_2'$, where
$K_i'$ is the set of corners of $K_i$.
\end{cor}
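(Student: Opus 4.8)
The plan is to recognize the final statement as essentially a special case of Theorem~\ref{unionCubes}, so the proof will be a short deduction rather than an induction from scratch. First I would recall that a wedge $K_1 \vee K_2$ is, by definition, the union $K_1 \cup K_2$ where $K_1 \cap K_2$ is a single point and each $K_i$ carries the subspace structure; in particular, since the $K_i$ have axis-parallel edges, each $K_i$ is itself of the form $\prod_{j=1}^n [a_{ij}, b_{ij}]_{\Z}$ after applying a suitable isomorphism (translation), and $X = K_1 \vee K_2$ is $c_1$-connected because it is the union of two $c_1$-connected sets sharing a point. Thus the hypotheses of Theorem~\ref{unionCubes} are satisfied with $m = 2$.

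Next I would identify $K_i'$, the corner set of $K_i$, with the set $A_i = \prod_{j=1}^n \{a_{ij}, b_{ij}\}$ appearing in Theorem~\ref{unionCubes}; this is immediate from the cube terminology in Section~\ref{cubeSec}, where the corners of $\prod [a_{ij},b_{ij}]_{\Z}$ are exactly the points of $\prod \{a_{ij}, b_{ij}\}$. Then $A = A_1 \cup A_2 = K_1' \cup K_2'$, and Theorem~\ref{unionCubes} directly gives that $A$ is a $c_1$-freezing set for $X = K_1 \vee K_2$. If one wants to phrase things intrinsically — i.e., the $K_i$ sit in $\Z^n$ in arbitrary axis-parallel position rather than the normalized position — one invokes Theorem~\ref{freezeInvariant} to transport the conclusion, exactly as is done at the start of the proof of Theorem~\ref{cornersFreeze}.

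The only point requiring a moment's care, and the place I would expect a referee to press, is the interplay between the wedge operation and connectivity: one must be sure that the single shared point of the wedge makes $K_1 \cup K_2$ genuinely $c_1$-connected as a subset of $\Z^n$ (so that Theorem~\ref{unionCubes} applies) and that the continuous self-maps of the wedge are the same as continuous self-maps of $X = K_1 \cup K_2$ viewed as a digital image — which they are, since the wedge's adjacency is by definition the subspace $c_1$-adjacency inherited from $\Z^n$. Once that identification is made explicit, there is no real obstacle: the corollary is a one-line instance of Theorem~\ref{unionCubes}. I would therefore write the proof as: apply Theorem~\ref{freezeInvariant} to normalize each $K_i$, observe $X$ is $c_1$-connected, note $K_i' = A_i$, and cite Theorem~\ref{unionCubes}.
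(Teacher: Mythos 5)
Your proposal is correct and matches the paper's approach: the paper also proves this corollary as an immediate instance of Theorem~\ref{unionCubes}, with the wedge viewed as a $c_1$-connected union of two cubes and the corner sets playing the role of the $A_i$. Your extra remarks on normalization and connectivity are fine but not needed beyond the one-line citation.
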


\begin{proof}
This follows immediately from Theorem~(\ref{unionCubes}).
\end{proof}

\begin{remark}
Theorem~\ref{unionCubes} can be used to obtain a $c_1$-freezing
set for any finite digital image $X \subset \Z^3$, since $X$ is
trivially a union of cubes
$[a,a]_{\Z} \times  [b,b]_{\Z} \times [c,c]_{\Z}$.
\end{remark}

\begin{remark}
Often, the freezing set of Theorem~\ref{unionCubes} is not
minimal. However, the theorem is valuable in that it often
gives a much smaller subset of $X$ than $X$ itself as a freezing
set. As a simple example of the non-minimal assertion, consider
\[ X = [0,4]_{\Z}^2 \times [0,2]_{\Z} \cup  [0,4]_{\Z}^2 \times [2,4]_{\Z}.
\]
For this description of $X$, Theorem~\ref{unionCubes} gives the
$c_1$-freezing set
\[ A = \{ \, 0,4 \, \}^2 \times \{ \, 0,2 \, \} \cup
        \{ \, 0,4 \, \}^2 \times \{ \, 2,4 \, \} =
         \{ \, 0,4 \, \}^2 \times \{ \, 0,2, 4 \, \},
\]
a set of 12 points.
However, by observing that $X$ can be described as
$X = [0,4]_{\Z}^3$, we obtain from Theorem~\ref{cornersFreeze}
the $c_1$-freezing set $A' = \{ \, 0,4 \, \}^3$, a set of 8 points.
\end{remark}

The following example shows that a cubical ``cavity" 
(see Figure~\ref{fig:cubeWithCavity}) need not affect
determination of a freezing set.

\begin{figure}
    \centering
    \includegraphics{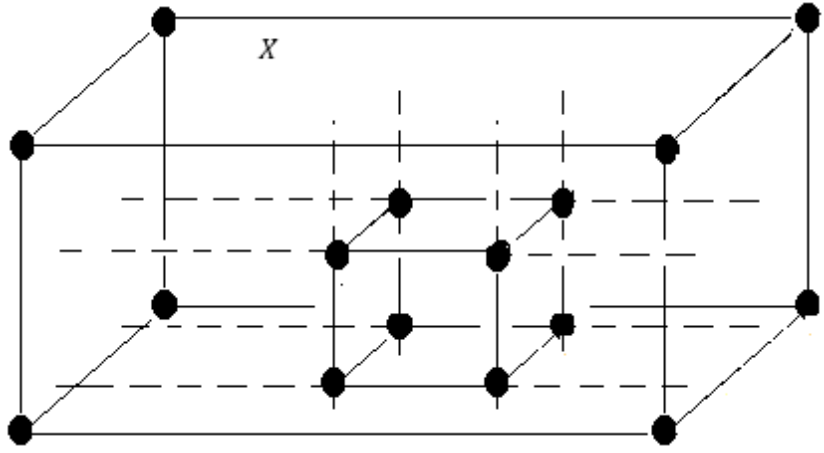}
    \caption{A cube with a cubical cavity}
    \label{fig:cubeWithCavity}
\end{figure}

\begin{exl}
Let $X = [0,6]_{\Z}^3 \setminus [2,4]_{\Z}^3$. Then
$A = \{ \, 0,6 \, \}^3$ is a freezing set for $(X,c_1)$.
\end{exl}

\begin{proof}
Note that by Theorem~\ref{cornersFreeze}, $A$ is a freezing
set for $K=([0,6]_{\Z}^3,c_1)$. We show that removing
$[2,4]_{\Z}^3$ need not change the freezing set.

Observe we can decompose $X$ as a union of cubes as follows:
Let 
\[ L_X \mbox{ (left) } = [0,6]_Z \times [0,1]_{\Z} \times [0,6]_Z,~~~
   R_X \mbox{ (right) } = [0,6]_Z \times [5,6]_{\Z} \times [0,6]_Z,
\]
\[ F_X \mbox{ (front) } = [5,6]_{\Z} \times [0,6]_Z^2,
   ~~~Ba_X \mbox{ (back) } = [0,1]_{\Z} \times [0,6]_Z^2 ,
\] 
\[ Bo_X \mbox{ (bottom) } = [0,6]_Z^2 \times [0,1]_{\Z},~~~
   T_X \mbox{ (top) } = [0,6]_Z^2 \times [5,6]_{\Z}.
\]
Then $X = L_X \cup R_X \cup F_X \cup Ba_X \cup Bo_X \cup T_X$.
Theorem~\ref{cornersFreeze} gives us a freezing set $B$ for
$(X,c_1)$ consisting of the corners of each of
$L_X, R_X, F_X, Ba_X, Bo_X, T_X$.

However, suppose $f \in C(X,c_1)$ is such that $f|_A = \id_A$.
As in the proof of Theorem~\ref{unionCubes}, each of the faces
$L,R,F,Ba,Bo,T$ of $K$ is a subset of $\Fix(f)$. Therefore, each
$x \in B \setminus A$ is on an axis-parallel digital segment that joins two
points of one of $L,R,F,Ba,Bo,T$, so by Proposition~\ref{uniqueShortestProp},
$x \in \Fix(f)$. Therefore, $A$ is a freezing set for $(X,c_1)$.
\end{proof}

\section{$c_n$-freezing sets in $\Z^n$}
We have the following.

\begin{prop}
\label{BdForc_u}
{\rm \cite{BxFpSets2}}
Let $X$ be a finite digital image in $\Z^n$. Let $A \subset X$.
Let $f \in C(X,c_u)$, where $1 \le u \le n$. If
$Bd(A) \subset \Fix(f)$, then $A \subset \Fix(f)$.
\end{prop}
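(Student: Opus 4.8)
The plan is to prove the coordinatewise statement: for every $x \in A$ and every index $i \in \{1,\dots,n\}$, $p_i(f(x)) = p_i(x)$. Granting this for all $i$ gives $f(x) = x$, and since $x \in A$ is arbitrary this is exactly $A \subseteq \Fix(f)$. If $x \in Bd(A)$ there is nothing to prove, so fix $x \in A \setminus Bd(A)$; by the definition of $Bd(A)$ this means every $c_1$-neighbor of $x$ lies in $A$. Let $e_i$ denote the point of $\Z^n$ whose $i$th coordinate is $1$ and all of whose other coordinates are $0$.

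The first step is to connect $x$ to $Bd(A)$ along two opposite axis-parallel chains. Since $A$ is finite, let $k_+ \ge 1$ be the largest integer with $\{x, x+e_i, \dots, x+k_+e_i\} \subseteq A$, and $k_- \ge 1$ the largest integer with $\{x, x-e_i, \dots, x-k_-e_i\} \subseteq A$. Maximality forces $x + (k_+ +1)e_i \notin A$ and $x - (k_- +1)e_i \notin A$, so the endpoints $x^+ = x+k_+e_i$ and $x^- = x-k_-e_i$ each have a $c_1$-neighbor outside $A$; hence $x^+, x^- \in Bd(A) \subseteq \Fix(f)$. Consecutive points of either chain differ in one coordinate by $1$, so they are $c_1$-adjacent, and in particular $c_u$-adjacent.

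The second step is to push fixedness inward along these chains using Lemma~\ref{c1pulling}. To get $p_i(f(x)) \ge p_i(x)$, set $y_j = x + je_i$ for $0 \le j \le k_+$ and argue by downward induction on $j$ that $p_i(f(y_j)) \ge p_i(y_j)$: the base case $j = k_+$ holds because $y_{k_+} = x^+ \in \Fix(f)$; for the inductive step, if $p_i(f(y_j)) < p_i(y_j)$ then, since $p_i(y_j) < p_i(y_{j+1})$, the second part of Lemma~\ref{c1pulling} applied to $q = y_j$, $q' = y_{j+1}$ would give $p_i(f(y_{j+1})) < p_i(y_{j+1})$, contradicting the inductive hypothesis. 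A symmetric induction along $z_j = x - je_i$, using the first part of Lemma~\ref{c1pulling}, yields $p_i(f(x)) \le p_i(x)$. Combining the two inequalities gives $p_i(f(x)) = p_i(x)$, completing the proof.

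I do not expect one dominant obstacle. The point that needs care is that Lemma~\ref{c1pulling} is a ``pulling'' statement about how moving a point drags the point ``behind'' it, whereas here we use its contrapositive, transporting fixedness from the far endpoint of a chain back \emph{toward} $x$. One should also verify that the chain endpoints genuinely lie in $Bd(A)$ — this relies only on $Bd$ being defined through $c_1$-adjacency, so the argument is insensitive to which $c_u$ the map $f$ is continuous for. (If one prefers, the degenerate cases $k_\pm = 0$, i.e.\ $x$ already on $Bd(A)$, can simply be folded into the base case, making the case split at the start unnecessary.)
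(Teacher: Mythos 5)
Your proof is correct: the two-sided chains stay in $A$ because $x \notin Bd(A)$ and by maximality their endpoints lie in $Bd(A) \subseteq \Fix(f)$, and the downward inductions via the two parts of Lemma~\ref{c1pulling} (whose hypotheses apply since consecutive chain points are $c_1$- and hence $c_u$-adjacent) correctly pin down $p_i(f(x)) = p_i(x)$ for each $i$. The paper itself only cites this proposition from~\cite{BxFpSets2} without proof, but your argument is essentially the same as the standard one there: a displaced point propagates displacement along an axis-parallel chain (via Lemma~\ref{c1pulling}) until it contradicts fixedness of a point of $Bd(A)$; you have merely organized it as a contrapositive induction from the boundary endpoints inward rather than a walk outward from the moved point.
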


\begin{thm}
\label{BdFreezes}
{\rm \cite{BxFpSets2}}
Let $X$ be a finite digital image in $\Z^n$. For $1 \le u \le n$, $Bd(X)$ is a
freezing set for $(X, c_u)$.
\end{thm}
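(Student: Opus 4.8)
The plan is to derive the statement directly from Proposition~\ref{BdForc_u} by taking $A = X$. Let $f \in C(X,c_u)$ be an arbitrary continuous self-map with $Bd(X) \subseteq \Fix(f)$; by Definition~\ref{freezeDef} it suffices to show $f = \id_X$, i.e., $X \subseteq \Fix(f)$. Since $X$ is finite and $A := X$ is a subset of $X$, the hypotheses of Proposition~\ref{BdForc_u} are satisfied: $f \in C(X,c_u)$ by assumption, and $Bd(A) = Bd(X) \subseteq \Fix(f)$ also by assumption. The proposition then yields $X = A \subseteq \Fix(f)$, which is exactly $f = \id_X$. Hence $Bd(X)$ is a freezing set for $(X,c_u)$.

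The only thing to check is the trivial observation that $Bd(A)$ coincides with $Bd(X)$ when $A = X$ --- immediate from the definition of the boundary, which in both cases asks for a $c_1$-neighbor lying in $\Z^n \setminus X$. In particular, the boundary is always taken with respect to $c_1$, so the same set $Bd(X)$ works uniformly for every $u$ with $1 \le u \le n$; the choice of $u$ enters only through the class $C(X,c_u)$ of maps we must pin down, and that dependence is already absorbed into Proposition~\ref{BdForc_u}.

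For context I would note the mechanism behind Proposition~\ref{BdForc_u}, since that is where the real work sits: if $f$ fixed the boundary but moved some point $p \in X$, say increasing its $i$th coordinate, one travels from $p$ in the positive $i$th coordinate direction along points of $X$ until exiting $X$; the last point before exiting lies in $Bd(X)$, and iterating Lemma~\ref{c1pulling} along this chain forces that boundary point to be moved as well, contradicting $Bd(X) \subseteq \Fix(f)$. I do not anticipate any genuine obstacle for the present theorem: once Proposition~\ref{BdForc_u} is available, the argument is a one-line specialization, and there are no edge cases (even a one-point image $X = \{x\}$ has $Bd(X) = \{x\}$, which is vacuously a freezing set).
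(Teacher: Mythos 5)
Your proof is correct and is exactly the intended derivation: the theorem is quoted from \cite{BxFpSets2}, where it follows from Proposition~\ref{BdForc_u} by taking $A = X$, just as you do (noting, as you did, that the paper's subset notation permits $A = X$ and that $Bd$ is always taken with respect to $c_1$ relative to $\Z^n \setminus X$). No gaps; the one-point edge case and the observation that $u$ enters only through $C(X,c_u)$ are handled correctly.
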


The following is inspired by Theorem~\ref{BdFreezes}.

\begin{thm}
Let $X = \prod_{i=1}^n [a_i,b_i]_{\Z}$, where for all~$i$, $b_i > a_i$.
Then $Bd(X)$ is a minimal freezing set for $(X,c_n)$.
\end{thm}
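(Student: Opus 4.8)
The statement has two halves: (1) $Bd(X)$ is a freezing set for $(X,c_n)$, and (2) it is minimal, i.e. no proper subset is a freezing set. Part (1) is immediate from Theorem~\ref{BdFreezes} applied with $u=n$, so the work is entirely in part (2). The plan is to show that every point of $Bd(X)$ is \emph{necessary}: for each $p \in Bd(X)$ there is a continuous map $g \in C(X,c_n)$ fixing every other point of $Bd(X)$ (indeed, every point of $X \setminus \{p\}$) but moving $p$. Equivalently — and this is the cleaner route — I would show each $p \in Bd(X)$ has a close $c_n$-neighbor in $X$, so that Lemma~\ref{closeNbr} forces $p$ into every freezing set; combined with (1) this gives minimality.

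\textbf{Key steps.} First, reduce via Theorem~\ref{freezeInvariant} to $X = \prod_{i=1}^n [0,m_i]_{\Z}$ with each $m_i \ge 1$. Second, characterize $Bd(X)$ concretely: since a point leaves $X$ by a $c_1$-step iff some coordinate is extremal, $p=(p_1,\dots,p_n) \in Bd(X)$ iff $p_i \in \{0,m_i\}$ for at least one index $i$; the interior $X \setminus Bd(X)$ is $\prod_i [1,m_i-1]_{\Z}$ (empty if some $m_i \le 2$... more precisely if some $m_i=1$, in which case $Bd(X)=X$ and minimality is just the statement that $X$ is the unique freezing set containing no close-neighbor-free points, handled the same way). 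Third, for $p \in Bd(X)$ exhibit a close $c_n$-neighbor $q$: let $S = \{i : p_i \in \{0,m_i\}\} \neq \emptyset$ and define $q$ by $q_i = p_i$ for $i \notin S$ and $q_i = p_i \pm 1$ (stepping inward, i.e. $+1$ if $p_i=0$, $-1$ if $p_i=m_i$) for $i \in S$. Then $q \in X$ and $q \adjeq_{c_n} p$. The claim to verify is $N(X,p,c_n) \subseteq N^*(X,q,c_n)$: any $y \adj_{c_n} p$ differs from $p$ by at most $1$ in each coordinate, and one checks $y$ differs from $q$ by at most $1$ in each coordinate too — for $i \notin S$ this is immediate since $q_i=p_i$; for $i \in S$, say $p_i=0$, the constraint $y \in X$ forces $y_i \in \{0,1\}$, and $q_i=1$, so $|y_i - q_i| \le 1$. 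Hence $y \adjeq_{c_n} q$, so $q$ is a close $c_n$-neighbor of $p$, and Lemma~\ref{closeNbr} puts $p$ in every freezing set. Since $p \in Bd(X)$ was arbitrary, every freezing set contains $Bd(X)$; with (1) this proves minimality.

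\textbf{Main obstacle.} The only real subtlety is the verification that the candidate $q$ is genuinely a close $c_n$-neighbor, i.e. that membership in $X$ (not just $c_n$-adjacency to $p$) pins down each coordinate of a neighbor $y$ of $p$ tightly enough that $|y_i-q_i|\le 1$ for \emph{all} $i$ simultaneously; this is exactly where the extremality of $p$ in the coordinates $i\in S$ is used, and where the argument would fail for an interior point. A secondary point worth stating carefully is the degenerate case where some $m_i=1$ so that $Bd(X)=X$: there the same close-neighbor construction still applies to every point of $X$, so the argument is uniform. One should also note that the $c_n$ adjacency is essential for the close-neighbor argument — stepping inward in all of several coordinates at once is a single $c_n$-move but not a $c_1$-move — which is consistent with Example~\ref{hypercubeEx} showing the analogous $c_1$ statement can fail.
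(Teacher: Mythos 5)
Your proof is correct, but it establishes minimality by a different mechanism than the paper. Both arguments get the freezing-set half from Theorem~\ref{BdFreezes}. For minimality, you show that each $p \in Bd(X)$ has a close $c_n$-neighbor $q$ (stepping inward by $1$ in every extremal coordinate of $p$) and invoke Lemma~\ref{closeNbr} to force $p$ into every freezing set; the paper instead exhibits, for each $x_0 \in Bd(X)$, an explicit map $f$ that fixes $X \setminus \{x_0\}$ and pushes $x_0$ inward by $1$ in a single extremal coordinate, verifies $c_n$-continuity by cases, and concludes that $Bd(X) \setminus \{x_0\}$ is not a freezing set. The two routes are close cousins: the core computation is identical (if $y \in X$ and $y \adj_{c_n} p$, then in each extremal coordinate of $p$ the constraint $y \in X$ pins $y$ to a two-value range, so the inward-shifted point stays within distance $1$ of every neighbor of $p$ in each coordinate), and in fact stepping inward in just one extremal coordinate already produces a close $c_n$-neighbor, so your all-coordinates shift is more than is needed, though perfectly valid. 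What your route buys: brevity, a strictly stronger conclusion ($Bd(X)$ is contained in \emph{every} freezing set, hence is the unique minimal one), and a statement that subsumes the corner result of Theorem~\ref{closeCorners} for $c_n$ without the hypothesis $b_i > a_i + 1$. What the paper's route buys: an explicit witness map, which makes concrete, without appeal to Lemma~\ref{closeNbr}, why deleting any single boundary point destroys the freezing property. Your handling of the degenerate case $m_i = 1$ (where $Bd(X) = X$) and your remark that the inward step is a single $c_n$-move but not a $c_1$-move, which is why the argument is special to $c_n$, are both accurate.
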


\begin{proof}
By Theorem~\ref{BdFreezes}, $Bd(X)$ is a freezing set for $(X,c_n)$.
We must show its minimality.

Consider a point 
$x_0 = (x_1, \ldots, x_n) \in Bd(X) = \prod_{i=1}^n \{ \, a_i,b_i \, \}$.
For some index~$i$, $p_i(x_0) \in \{ \, a_i,b_i \, \}$. Without loss
of generality, $p_i(x_0)=a_i$. Consider the function
$f: X \to X$ given by
\[ f(x) = \left \{ \begin{array}{ll}
     x & \mbox{if } x \neq x_0; \\
     (x_1, \ldots, x_{i-1}, a_i + 1, x_i, \ldots, x_n) & \mbox{if } x = x_0.
\end{array} \right .
\]
Suppose $x' \adj_{c_n} x$ in $X$.
\begin{itemize}
    \item If neither $x$ nor $x'$ is $x_0$, then $f(x') = x' \adj x = f(x)$.
    \item Suppose, say, $x=x_0$.
    \begin{itemize}
        \item If $p_i(x') \neq a_i$ then $x'$ differs from $x_0$ in at most~$n$
          coordinates, including having $p_i(x')=a_i+1$, so
          $f(x')=x'$ differs from $f(x)$ by 1 in at most~$n-1$ coordinates,
          and by 0 in any other coordinates.
        \item If $p_i(x') = a_i$ then $x'$ differs from $x$ by 1 in at most~$n-1$
          coordinates, so $f(x')=x'$ differs from $f(x)$ by 1 in at most~$n$ coordinates, and by 0 in any other coordinates.
    \end{itemize}
\end{itemize}
Thus $f \in C(X,c_n)$, 
$f|_{Bd(X) \setminus \{\, x \, \}} = \id_{Bd(X) \setminus \{\, x \, \}}$,
and $f \neq \id_X$. Thus ${Bd(X) \setminus \{\, x \, \}}$ is not a
$c_n$-freezing set for $X$. Thus $Bd(X)$ is a minimal freezing set.
\end{proof}

\begin{thm}
Let $X = \bigcup_{i=1}^m K_i$ where $m \ge 1$,
\[ K_i = \prod_{j=1}^n [a_{ij}, b_{ij}]_{\Z} \subset \Z^n,
\]
and $X$ is $c_n$-connected. Let $A_i = Bd(K_i)$ and let
$A = \bigcup_{i=1}^m A_i$. Then $A$ is a freezing set for $(X,c_n)$.
\end{thm}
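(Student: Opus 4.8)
The plan is to follow the template of the proof of Theorem~\ref{unionCubes}, replacing the appeal made there to Theorem~\ref{cornersFreeze} by an appeal to Proposition~\ref{BdForc_u}. The first thing to pin down is that, for each cube $K_i$, the set $A_i = Bd(K_i)$ named in the statement really is the Rosenfeld boundary of $K_i$ viewed as a subset of $\Z^n$: for $x \in K_i$ there is a $c_1$-neighbour of $x$ lying in $\Z^n \setminus K_i$ precisely when $p_j(x) \in \{\,a_{ij},b_{ij}\,\}$ for some $j$, so $Bd(K_i)$ is exactly the union of the facets of $K_i$, which is the set $A_i$ under discussion. Hence the hypothesis $A \subset \Fix(f)$ gives $Bd(K_i) \subset \Fix(f)$ for every $i$.

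First I would fix $f \in C(X,c_n)$ with $A \subset \Fix(f)$ and let $x \in X$ be arbitrary; choose an index $i$ with $x \in K_i$. Next, since $A_i = Bd(K_i) \subset A \subset \Fix(f)$, I would apply Proposition~\ref{BdForc_u} to the finite digital image $X$, taking the subset $K_i \subseteq X$ in the role of ``$A$'' there, with $u = n$ and with the given $f \in C(X,c_n)$. The hypothesis $Bd(K_i) \subset \Fix(f)$ is satisfied, so the proposition yields $K_i \subset \Fix(f)$; in particular $x \in \Fix(f)$. Since $x \in X$ was arbitrary, $X = \Fix(f)$, and therefore $A$ is a freezing set for $(X,c_n)$.

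The argument presents no real obstacle; the only point requiring a moment's care is confirming that the hypotheses of Proposition~\ref{BdForc_u} transfer correctly — namely that a sub-box $K_i$ of $X$ is an admissible choice for the set ``$A$'' in that proposition while $f$ is taken to be continuous on all of $X$. This is legitimate because $K_i \subseteq X$ and membership in $C(X,c_n)$ is exactly the input the proposition demands; no continuity of $f$ restricted to $K_i$ into $K_i$, and no bound-preservation property of the sort needed in the $c_1$ argument, is required here. As in Theorem~\ref{unionCubes}, the hypothesis that $X$ is $c_n$-connected is not actually used in the proof; it is retained because freezing sets are customarily studied for connected images.

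Finally, I would note that the degenerate situations are subsumed without extra work: if some $K_i$ has dimension less than $n$ (including the case of a single point), then every coordinate of $K_i$ is extremal for at least one index, so $Bd(K_i) = K_i$, and $K_i \subset \Fix(f)$ follows at once from $A_i \subset \Fix(f)$; the substantive content of the proof concerns those $K_i$ possessing a coordinate $j$ with $b_{ij} \ge a_{ij}+2$, where $Bd(K_i)$ is a proper subset of $K_i$ and Proposition~\ref{BdForc_u} is genuinely needed.
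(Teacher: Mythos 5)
Your proposal is correct and follows essentially the same route as the paper's own proof: for $f \in C(X,c_n)$ fixing $A$, apply Proposition~\ref{BdForc_u} with $u=n$ and with $K_i$ in the role of the subset ``$A$'' to conclude $K_i \subset \Fix(f)$ for each $i$, hence $f = \id_X$. Your added remarks (identifying $Bd(K_i)$ as the union of facets, the unused connectedness hypothesis, and the degenerate cubes) are accurate clarifications but do not change the argument.
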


\begin{proof}
By Theorem~\ref{BdFreezes}, $A_i$ is a freezing set for $(K_i,c_n)$. Let
$f \in C(X,c_n)$ be such that $f|_A = \id_A$. Given $x \in X$, for some
index~$q$ we have $x \in K_q$. It follows from 
Proposition~\ref{BdForc_u} that $K_q \subset \Fix(f)$.
Thus $f=\id_X$, and the assertion follows.
\end{proof}

\section{Further remarks}
We have studied freezing sets for finite digital images
in~$\Z^n$ with respect to the $c_1$ and $c_n$ adjacencies.
For both of these adjacencies, we have shown that a
decomposition of an image $X$ as a finite union of cubes lets us
find a freezing set for $X$ as a union of freezing sets
for the cubes of the decomposition. Such a freezing set
is not generally minimal, but often is useful in having
cardinality much smaller than the cardinality of~$X$.

\end{document}